\def\p{\partial}
\def\R{\mathbb{R}}
\def\vv<#1>{\langle#1\rangle}
\def\XXint#1#2{\setbox0=\hbox{$#1{#2}{\int}$}{#2}\kern-.5\wd0 }
\def\XXint#1#2#3{{\setbox0=\hbox{$#1{#2#3}{\int}$}
     \vcenter{\hbox{$#2#3$}}\kern-.5\wd0}}
\def\vv<#1>{\langle#1\rangle}
\newtheorem{thm}{Theorem}[section]
\newtheorem{cor}{Corollary}[section]
\theoremstyle{definition}
\theoremstyle{remark}
\numberwithin{equation}{section}
\begin{document}

\title{A note on Li-Yau type gradient estimate}

\author{Chengjie Yu$^1$}
\address{Department of Mathematics, Shantou University, Shantou, Guangdong, 515063, China}
\email{cjyu@stu.edu.cn}
\author{Feifei Zhao}
\address{Department of Mathematics, Shantou University, Shantou, Guangdong, 515063, China}
\email{14ffzhao@stu.edu.cn}
\thanks{$^1$Research partially supported by the Yangfan project from Guangdong Province and NSFC 11571215.}

\renewcommand{\subjclassname}{%
  \textup{2010} Mathematics Subject Classification}
\subjclass[2010]{Primary 35K05; Secondary 53C44}
\date{}
\keywords{Heat equation, Li-Yau type gradient estimate, heat kernel}
\begin{abstract}
In this paper, we obtain  Li-Yau type gradient estimates with time dependent parameter for positive solutions of the heat equation that are different with the estimates by Li-Xu \cite{LX} and Qian \cite{Qi}. As an application of the estimate, we also obtained slight improvements of Davies' Li-Yau type gradient estimate.
\end{abstract}
\maketitle\markboth{Yu \& Zhao}{Li-Yau type gradient estimate}
\section{Introduction}
Let $(M^n,g)$ be a complete Riemannian manifold with Ricci curvature bounded from below by $-k$, where $k$ is a nonnegative constant. Let $u$ be a positive solution of the heat equation:
\begin{equation}\label{eq-heat}
\Delta u-u_t=0.
\end{equation}
In the fundamental paper \cite{LY}, Li and Yau obtained the following important gradient estimate for $u$:
\begin{equation}\label{eq-LY}
\|\nabla f\|^2-\alpha f_t\leq \frac{n\alpha^2}{2t}+\frac{n\alpha^2k}{2(\alpha-1)}
\end{equation}
for any $\alpha>1$, where $f=\log u$. When $k=0$, by letting $\alpha\to1$, one have
\begin{equation}\label{eq-LY-0}
\|\nabla f\|^2-f_t\leq \frac{n}{2t}.
\end{equation}
This estimate is sharp where the equality can be achieved by the fundamental solution of  $\R^n$. However, \eqref{eq-LY} is not sharp when $k>0$. Finding sharp Li-Yau type gradient estimate for $k>0$ is still an open problem.

The Li-Yau gradient estimate \eqref{eq-LY} is an important tool in geometric analysis. It gives us the Harnack inequality for positive solution of the heat equation and Gaussian bounds for the heat kernel. Many works were done to improve or generalize \eqref{eq-LY}, for examples, the works \cite{BQ,BL,CFL,Ya94,Ya95}. Recently, in \cite{ZZh}, the authors extended the Li-Yau type gradient estimate to metric measure spaces, and in \cite{ZZ0,ZZ,Ca,Ro}, the authors obtained Li-Yau type gradient estimate under integral curvature assumptions. The Li-Yau gradient estimate was also extended to a matrix form by Hamilton \cite{Ha}, extended to complete K\"ahler manifolds by Cao and Ni \cite{CN},extended to Ricci flow and K\"ahler-Ricci flow by Hamilton \cite{Ha} and Cao \cite{Cao} respectively, extended to Hodge heat flows of $(p,p)$-forms on K\"ahler manifolds by Ni and Niu in \cite{NN}, and extended to the constraint case  in \cite{CH} and \cite{RY}. Li-Yau type gradient estimates also played important roles in Perelman's  work \cite{Pe}.

Li-Yau's gradient estimate \eqref{eq-LY} was improved by Davies \cite{Da} to
\begin{equation}\label{eq-Davies}
\|\nabla f\|^2-\alpha f_t\leq \frac{n\alpha^2}{2t}+\frac{n\alpha^2k}{4(\alpha-1)}.
\end{equation}
By comparing to the fundamental solution, it is clear that \eqref{eq-Davies} is even not sharp in leading term as $t\to 0$. In \cite{Ha}, Hamilton obtained
\begin{equation}\label{eq-Ha}
\|\nabla f\|^2-e^{2kt}f_t\leq e^{4kt}\frac{n}{2t},
\end{equation}
which is sharp in leading term as $t\to 0$. However, the estimate behaves bad when $t\to\infty$.
In recent years, Li and Xu \cite{LX} obtained
\begin{equation}\label{eq-LX}
\|\nabla f\|^2-\left(1+\frac{\sinh(kt)\cosh(kt)-kt}{\sinh^{2}(kt)}\right)f_t\leq\frac{nk}{2}[\coth(kt)+1].
\end{equation}
Note that
\begin{equation}
1+\frac{\sinh(kt)\cosh(kt)-kt}{\sinh^{2}(kt)}\sim 1
\end{equation}
and
\begin{equation}
\frac{nk}{2}[\coth(kt)+1]\sim \frac{n}{2t}
\end{equation}
as $t\to 0^+$. Hence, \eqref{eq-LX} is sharp in leading term as $t\to 0$. Furthermore, note that
\begin{equation}
1+\frac{\sinh(kt)\cosh(kt)-kt}{\sinh^{2}(kt)}\sim 2 (\triangleq\alpha)
\end{equation}
and
\begin{equation}
\frac{nk}{2}[\coth(kt)+1]\sim nk=\frac{n\alpha^2k}{4(\alpha-1)}
\end{equation}
as $t\to\infty$. The asymptotic behavior of \eqref{eq-LX} as $t\to\infty$ is the same as \eqref{eq-Davies} with $\alpha=2$ , and is better than \eqref{eq-Ha}. This estimate was also obtained in \cite{BB} by a different method. In the same paper \cite{LX}, Li and Xu  also obtained a linearized version of \eqref{eq-LX}:
\begin{equation}\label{eq-LX-linear}
\|\nabla f\|^2-\left(1+\frac{2}{3}kt\right)f_t\leq\frac{n}{2t}+\frac{nk}{2}\left(1+\frac{1}{3}kt\right).
\end{equation}
This estimate is also sharp in leading term as $t\to 0$. Although blowing up linearly as $t\to\infty$, it is still better than \eqref{eq-Ha} as $t\to\infty$.
This estimate was previously obtained by Bakry and Qian \cite{BQ} with a different method.

The estimates \eqref{eq-LX} and \eqref{eq-LX-linear} were later generalized by Qian \cite{Qi} to the following general form:
\begin{equation}\label{eq-Qian}
\|\nabla f\|^2-\left(1+\frac{2k}{a(t)}\int_0^ta(s)ds\right) f_t\leq \frac{nk}{2}+\frac{nk^{2}}{2a(t)}\int_{0}^{t}a(s)ds+\frac{n}{8a(t)}\int_{0}^{t}\frac{a'^{2}(s)}{a(s)}ds,
\end{equation}
where  $a(t)$ is a smooth function satisfying:
\begin{enumerate}
 \item[(A1)]$\forall t>0$, $a(t)>0$, $a'(t)>0$;
 \item[(A2)]$\lim_{t\to0}a(t)=0$, $\lim_{t\to0}\frac{a(t)}{a'(t)}=0$;
 \item[(A3)] $\frac{a'^2}{a}$ is integrable near $0$.
 \end{enumerate}
 The estimates \eqref{eq-LX} and \eqref{eq-LX-linear} are special cases of \eqref{eq-Qian} with $a(t)=\sinh^2(kt)$ and $a(t)=t^2$ respectively. Moreover, by choosing $a(t)=t^{\frac{2}\theta-1}$ with $\theta\in (0,1)$, Qian \cite{Qi} obtained the following extension of \eqref{eq-LX-linear}:
 \begin{equation}\label{eq-Qian2}
\|\nabla f\|^2-(1+\theta kt) f_t\leq\frac{(2-\theta)^2n}{16\theta(1-\theta)t}+\frac{nk^2\theta t}{4}+\frac{nk}{2}.
\end{equation}
When $\theta=\frac{2}{3}$ which is the minimum point of $\frac{(2-\theta)^2}{\theta(1-\theta)}$ with $\theta\in (0,1)$, it gives us \eqref{eq-LX-linear}.

For convenience of comparison, one can rewrite Davies' Li-Yau gradient estimate as
\begin{equation}\label{eq-Davies2}
\beta\|\nabla f\|^2-f_t\leq \frac{n}{2\beta t}+\frac{nk}{4(1-\beta)}
\end{equation}
for any $\beta\in (0,1)$. For example, for any fixed $t>0$, the right hand side of \eqref{eq-Davies2} achieves its minimum at
\begin{equation}
\beta_m(t)=\frac{1}{1+\sqrt\frac{kt}2}.
\end{equation}
Therefore, for a fixed $t>0$, \eqref{eq-Davies2} with $\beta<\beta_m(t)$ can be implied by \eqref{eq-Davies2} with $\beta=\beta_m(t)$ since
\begin{equation}
\begin{split}
\beta\|\nabla f\|^2-f_t\leq&\beta_m(t)\|\nabla f\|^2-f_t\\
\leq&\frac{n}{2\beta_m(t) t}+\frac{nk}{4(1-\beta_m(t))}\\
\leq&\frac{n}{2\beta t}+\frac{nk}{4(1-\beta)}\\
\end{split}
\end{equation}
in this case. In \cite{ZZ0, ZZ}, the authors also wrote the Li-Yau gradient estimate in this form.

In this paper, we  first obtain the following Li-Yau type gradient estimate with time dependent parameter.
\begin{thm}\label{thm-LY-g}
Let $(M^n,g)$ be a complete Riemannian manifold with Ricci curvature bounded from below by $-k$, where $k$ is a nonnegative constant. Let $u\in C^\infty(M\times [0,T])$ be a positive solution of the heat equation \eqref{eq-heat} and $\beta\in C^1([0,T])$ such that
\begin{enumerate}
\item[(B1)] $0<\beta(t)<1$ for any $t\in (0,T]$;
\item[(B2)] $(1-\beta(0))^2+\beta'(0)^2>0$ and $\beta(0)>0$.
\end{enumerate}
Let
\begin{equation}
\psi_1(t)=\frac{n}{2t}\max_{s\in [0,t]}\left(\frac{1}{\beta(s)}+\frac{(2k\beta(s)+\beta'(s))_+s}{4\beta(s)(1-\beta(s))}\right).
\end{equation}
Then,
\begin{equation}
\beta(t)\|\nabla f\|^2-f_t\leq\psi_1(t)
\end{equation}
on $M\times(0,T]$, where $f=\log u$. Here $a_+:=\max\{a,0\}$.
\end{thm}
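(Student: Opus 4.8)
The plan is to run the Li--Yau maximum principle argument on the quantity $w:=\beta(t)\|\nabla f\|^2-f_t$. Writing $f=\log u$, equation \eqref{eq-heat} becomes $f_t=\Delta f+\|\nabla f\|^2$, so that $-\Delta f=(1-\beta)\|\nabla f\|^2+w$; this last identity is what later converts the Hessian term into an expression in $w$ and $\|\nabla f\|^2$. First I would compute the evolution of $w$ under the heat operator $\Delta-\partial_t$. Differentiating the equation in $t$ gives $f_{tt}-\Delta f_t=2\langle\nabla f,\nabla f_t\rangle$, and combining this with the Bochner formula, the trace inequality $\|\nabla^2 f\|^2\ge(\Delta f)^2/n$, and $\Ric\ge -k$ (so $2\beta\,\Ric(\nabla f,\nabla f)\ge -2k\beta\|\nabla f\|^2$ since $\beta>0$), a direct calculation yields
\[
(\Delta-\partial_t)w \ge \frac{2\beta}{n}\left((1-\beta)\|\nabla f\|^2+w\right)^2-(2k\beta+\beta')\|\nabla f\|^2-2\langle\nabla f,\nabla w\rangle.
\]
The only delicate point in this step is bookkeeping: after substituting $\nabla f_t=\beta\nabla\|\nabla f\|^2-\nabla w$ (using $\nabla\beta=0$), the two terms $\pm 2\beta\langle\nabla f,\nabla\|\nabla f\|^2\rangle$ cancel, leaving the clean transport term $-2\langle\nabla f,\nabla w\rangle$.

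Next, fix $T_0\in(0,T]$ and apply the maximum principle to $G:=tw$ on $M\times[0,T_0]$. On a noncompact $M$ this requires the standard Li--Yau spatial cutoff $\eta$ supported in a ball $B(x_0,2R)$, where the error terms $\|\nabla\eta\|^2/\eta$ and $\Delta\eta$ are controlled by the Laplacian comparison theorem (which again uses $\Ric\ge -k$) and are shown to vanish as $R\to\infty$. I expect this localization to be the main technical obstacle, though it is by now routine. At an interior maximum $(x_0,t_0)$ of $G$ one has $\nabla w=0$, $\Delta G\le 0$ and $\partial_t G\ge 0$, so from $(\Delta-\partial_t)(tw)=t(\Delta-\partial_t)w-w\le 0$ the transport term drops out; writing $H=\|\nabla f\|^2\ge0$ and $b=(2k\beta(t_0)+\beta'(t_0))_+$, the displayed inequality becomes the pointwise quadratic constraint $\frac{2\beta t_0}{n}\left((1-\beta)H+w\right)^2\le b\,t_0 H+w$ at that point.

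Finally I would extract the stated bound from this constraint. If $2k\beta+\beta'\le 0$ at $t_0$ then $b=0$, and since $(1-\beta)H+w\ge w$ (using $H\ge0$ and $1-\beta>0$ from (B1)), it reduces to $\frac{2\beta t_0}{n}w^2\le w$, i.e. $t_0 w\le \frac{n}{2\beta}$. In general, treating the constraint as admitting some $H\ge 0$, one optimizes over $H$: the interior critical point occurs where $(1-\beta)H+w=\frac{bn}{4\beta(1-\beta)}$, and the resulting (piecewise) optimum is dominated by the clean additive expression $t_0 w\le \frac{n}{2\beta}+\frac{n\,b\,t_0}{8\beta(1-\beta)}=\frac{n}{2}\left(\frac{1}{\beta}+\frac{b\,t_0}{4\beta(1-\beta)}\right)$, whose two summands are exactly the ``$H=0$'' and curvature contributions. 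Here lies the second subtlety, namely verifying that this additive formula really dominates the exact optimum in every regime. Thus $G(x_0,t_0)\le\frac{n}{2}\phi(t_0)\le\frac{n}{2}\max_{s\in[0,T_0]}\phi(s)$, where $\phi(s)$ denotes the bracketed function appearing in $\psi_1$. Evaluating $G=tw$ at $t=T_0$ gives $w\le\frac{n}{2T_0}\max_{s\in[0,T_0]}\phi(s)=\psi_1(T_0)$ for all $x$, and since $T_0\in(0,T]$ is arbitrary this is the claim. Conditions (B1)--(B2) are precisely what guarantee $\beta>0$, $1-\beta>0$, and $\phi$ bounded on $[0,t]$, so that $\psi_1$ is finite and the argument closes.
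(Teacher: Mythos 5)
Your proposal is correct and follows essentially the same route as the paper's proof: the same quantity $t(\beta\|\nabla f\|^2-f_t)$, the same Bochner/cancellation computation, the Li--Yau cutoff and maximum-principle localization, and the same optimization in $H$ (the paper's version of your ``second subtlety'' is the elementary inequality $\frac{aQ+c}{(1+bQ)^2}\leq\frac{a}{4b}+c$ for $a,b,c,Q>0$, which follows from $(1+bQ)^2\geq\max\{1,4bQ\}$ and confirms your claimed additive bound). The only differences are cosmetic (working with $w$ and multiplying by $t$ at the end rather than with $F=tw$ throughout, and parametrizing by $H$ rather than $Q=\|\nabla f\|^2/F$).
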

When $\beta$ is constant, Theorem \ref{thm-LY-g} gives us \eqref{eq-Davies2}. Moreover, by choosing $\beta(t)=e^{-2\theta kt}$ with $\theta\in (0,1]$ in Theorem \ref{thm-LY-g}, we obtain
\begin{equation}\label{eq-Ha-g}
e^{-2\theta kt}\|\nabla f\|^2-f_t\leq \frac{n}{2t}e^{2\theta kt}+\frac{nk(1-\theta)}{4(1-e^{-2\theta kt})}.
\end{equation}
When $\theta=1$, this gives us Hamilton's Li-Yau type gradient estimate \eqref{eq-Ha}. By choosing $\beta(t)=\frac{1}{1+\theta kt}$ with $\theta>0$ in Theorem \ref{thm-LY-g}, one has
\begin{equation}\label{eq-LY-linear}
  \frac{1}{1+\theta kt}\|\nabla f\|^2-f_t\leq \frac{n(1+\theta kt)}{2t}+\frac{n(2-\theta+2\theta kt)_+}{8\theta t}.
\end{equation}
Comparing to \eqref{eq-Qian2}, this estimate has an advantage that the right hand side does not blow up when $\theta\to1$ and works for any $\theta>0$.

Moreover, Theorem \ref{thm-LY-g} can give us improvements of the Li-Yau type gradient estimate \eqref{eq-Davies2}. For any $t_0\in (0,T]$ and $\beta_0\in (0,1)$, let
\begin{equation}\label{eq-phi}
\begin{split}
\varphi_1(\beta_0,t_0)=\inf\{&\psi_1(t_0)\ |\\
\ \ \ \ \ \ \ \ \ \ \ \ & \beta\ \mbox{satisfies (B1), (B2) on }[0,t_0]\mbox{, and }\beta(t_0)=\beta_0. \}
\end{split}
\end{equation}
Then, by Theorem \ref{thm-LY-g},
\begin{equation}
\beta\|\nabla f\|^2-f_t\leq\varphi_1(\beta,t).
\end{equation}
So, a good upper bound of the function $\varphi_1(\beta, t)$ will give us an improvement of \eqref{eq-Davies2}. For example, by choosing $\beta(t)=1-\theta kt$
with $\theta>0$ as a test function, one can obtain the following improvement of \eqref{eq-Davies2}.
\begin{cor}\label{cor-LY-g}
Let the notation be the same as in Theorem \ref{thm-LY-g} with $k>0$. Then, for any constant $\beta\in (0,1)$, we have
\begin{equation}\label{eq-cor-LY-g}
\beta\|\nabla f\|^2-f_t\leq \left\{\begin{array}{ll} \frac{n}{2\beta t}&t< \frac{1-\beta}{2k\beta }\\
\frac{3n}{8\beta t}+\frac{nk}{4(1-\beta)}&t\geq \frac{1-\beta}{2k\beta }\end{array}\right.
\end{equation}
on $M\times (0,T]$.
\end{cor}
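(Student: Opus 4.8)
\emph{Proof proposal.} The plan is to derive the corollary from Theorem~\ref{thm-LY-g} using the linear test function suggested in the discussion above. Fix the constant $\beta\in(0,1)$ and a time $t\in(0,T]$. For this fixed $t$ I would apply Theorem~\ref{thm-LY-g} with the nonconstant choice $\beta(s)=1-\theta k s$, where $\theta=\frac{1-\beta}{kt}$ is picked so that $\beta(t)=\beta$; since the theorem yields $\beta(t)\|\nabla f\|^2-f_t\le\psi_1(t)$ and the coefficient at time $t$ is exactly the constant $\beta$, it suffices to show that $\psi_1(t)$ equals the right-hand side of \eqref{eq-cor-LY-g}. Equivalently, this bounds $\varphi_1(\beta,t)$ from above.

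First I would verify admissibility on $[0,t]$: one has $\beta(s)<1$ and $\beta(t)=\beta>0$, giving (B1), while $\beta(0)=1>0$ and $\beta'(0)=-\theta k\neq0$ give (B2). Then a direct computation gives $2k\beta(s)+\beta'(s)=k(2-\theta)-2\theta k^2 s$, a decreasing function of $s$ vanishing at $s^\ast=\frac{2-\theta}{2\theta k}$; hence its positive part equals $k(2-\theta)-2\theta k^2 s$ for $s\le s^\ast$ and $0$ for $s>s^\ast$.

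The crux is to show that
\[
G(s):=\frac{1}{\beta(s)}+\frac{(2k\beta(s)+\beta'(s))_+\,s}{4\beta(s)(1-\beta(s))}
\]
is nondecreasing on $[0,t]$, so that the maximum defining $\psi_1(t)$ is attained at $s=t$. On $[0,s^\ast]$, substituting $\beta(s)=1-\theta ks$ collapses $G$ to $\frac{3\theta+2-2\theta ks}{4\theta(1-\theta ks)}$, whose derivative I would check is strictly positive; on $[s^\ast,t]$ the second term drops out and $G(s)=(1-\theta ks)^{-1}$ is clearly increasing. Since both expressions equal $2/\theta$ at $s^\ast$, $G$ is continuous and increasing throughout, so $\max_{s\in[0,t]}G(s)=G(t)$ (the degenerate case $s^\ast\le 0$, i.e. $\theta\ge 2$, is covered by the branch $t>s^\ast$).

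Finally I would split on whether $t>s^\ast$ or $t\le s^\ast$. Writing $s^\ast=\frac{t}{1-\beta}-\frac{1}{2k}$, the inequality $t>s^\ast$ rearranges to $t<\frac{1-\beta}{2k\beta}$; in this range the positive part already vanishes at $s=t$, so $G(t)=1/\beta$ and $\psi_1(t)=\frac{n}{2\beta t}$. In the complementary range $t\ge\frac{1-\beta}{2k\beta}$ one has $t\le s^\ast$, and using $\theta kt=1-\beta$ to simplify $G(t)=\frac{3}{4\beta}+\frac{1}{2\theta}$ gives $\psi_1(t)=\frac{3n}{8\beta t}+\frac{nk}{4(1-\beta)}$, which is exactly \eqref{eq-cor-LY-g}. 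The one step needing genuine care is the monotonicity of $G$ across the corner at $s^\ast$; the rest is bookkeeping of the substitution $\theta=\frac{1-\beta}{kt}$.
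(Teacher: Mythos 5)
Your proposal is correct and follows essentially the same route as the paper: apply Theorem~\ref{thm-LY-g} with the test function $\beta(s)=1-\theta ks$, $\theta=\frac{1-\beta}{kt}$, show the quantity inside the maximum defining $\psi_1$ is increasing so the max sits at $s=t$, and then translate the case split at $s^\ast$ into the threshold $t=\frac{1-\beta}{2k\beta}$. The only (cosmetic) difference is that the paper verifies the monotonicity by rewriting the expression as $\max\bigl\{\frac{1}{2\theta}+\frac{3}{4(1-\theta ks)},\frac{1}{1-\theta ks}\bigr\}$, a maximum of two manifestly increasing functions, whereas you differentiate on each branch and match values at the corner.
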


Furthermore, by a slightly different argument with that in the proof of Theorem \ref{thm-LY-g}, we have the following different Li-Yau type estimate with time dependent parameter.
\begin{thm}\label{thm-LY-g-2}
Let the notation be the same as in Theorem \ref{thm-LY-g},
\begin{equation}
\sigma(t)=\max_{s\in[0,t]}\frac{(2k\beta(s)+\beta'(s))_+s}{1-\beta(s)}
\end{equation}
and
\begin{equation}
\lambda(t)=\inf_{s\in [0,t]}\beta(s).
\end{equation}
Then,
\begin{equation}
\beta(t)\|\nabla f\|^2-f_t\leq\psi_2(t)
\end{equation}
on $M\times(0,T]$, where
\begin{equation}
\psi_2(t)=\left\{\begin{array}{ll}\frac{n}{2\lambda t}&\sigma(t)< 2\\
\frac{n\sigma^2}{8(\sigma-1)\lambda t}&\sigma(t)\geq 2.\end{array}\right.
\end{equation}
\end{thm}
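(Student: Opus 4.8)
The plan is to run the Li--Yau maximum-principle scheme, performing the same Bochner computation that underlies Theorem \ref{thm-LY-g}, but extracting the final bound from the resulting quadratic inequality in a sharper way; this sharper extraction is the ``slightly different argument'' alluded to above. Writing $f=\log u$, the heat equation becomes $f_t=\Delta f+\|\nabla f\|^2$, so with $G=\beta(t)\|\nabla f\|^2-f_t$ one has $\Delta f=(\beta-1)\|\nabla f\|^2-G$. Differentiating $G$ in time and invoking the Bochner formula together with $\|\nabla^2 f\|^2\ge(\Delta f)^2/n$ and $\Ric\ge -k$, I would first derive
\[
(\Delta-\partial_t)G\ \ge\ \frac{2\beta}{n}(\Delta f)^2-(2k\beta+\beta')\|\nabla f\|^2-2\langle\nabla f,\nabla G\rangle .
\]
The only difference from the computation for Theorem \ref{thm-LY-g} is that the correction term $-(2k\beta+\beta')\|\nabla f\|^2$ is kept intact rather than being absorbed into the gradient term.

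Next I would apply the maximum principle to $F=tG$. On a complete manifold one localizes by multiplying by a standard Li--Yau cutoff $\phi$ supported on a geodesic ball, using the Laplacian comparison theorem (valid since $\Ric\ge -k$) to control $\Delta\phi$; the error terms tend to $0$ as the ball exhausts $M$, so it suffices to argue as if $F$ attains its maximum over $M\times[0,t_0]$ at an interior point $(x_0,s_0)$ with $s_0\in(0,t_0]$ (if $\max F\le 0$ there is nothing to prove). There $\nabla G=0$ and $(\Delta-\partial_t)F\le 0$, and since $(\Delta-\partial_t)(tG)=t(\Delta-\partial_t)G-G$, the inequality above yields, after multiplying by $s_0$ and writing $P:=s_0\|\nabla f\|^2\ge 0$, $F_0:=s_0 G$, $a:=1-\beta(s_0)>0$, and $c:=(2k\beta(s_0)+\beta'(s_0))s_0$,
\[
F_0\ \ge\ \frac{2\beta(s_0)}{n}\,(aP+F_0)^2-cP .
\]

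I would then read this as a quadratic inequality in $P$ with positive leading coefficient; the crux is that the actual (unknown) $P\ge 0$ renders the quadratic nonpositive, so its minimum over $P\ge 0$ is nonpositive. A short computation shows that the $F_0^2$-terms cancel in the discriminant, which is exactly what makes the resulting constraint on $F_0$ linear and clean; distinguishing whether the minimizing vertex lies at $P=0$ or at $P>0$ produces precisely two alternatives: either $F_0\le n/(2\beta(s_0))$, or $F_0\le \dfrac{n\rho^2}{8\beta(s_0)(\rho-1)}$ with $\rho:=c/a=\dfrac{(2k\beta(s_0)+\beta'(s_0))s_0}{1-\beta(s_0)}$. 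Comparing the two thresholds shows the second is the operative one precisely when $\rho\ge 2$, reproducing at time $s_0$ the dichotomy appearing in $\psi_2$.

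Finally I would replace the time-$s_0$ quantities by the monotone data $\lambda,\sigma$: since $\beta(s_0)\ge\lambda(t_0)$ and $\rho\le (2k\beta(s_0)+\beta'(s_0))_+s_0/(1-\beta(s_0))\le\sigma(t_0)$, using the monotonicity of $x\mapsto x^2/(x-1)$ on $[2,\infty)$ together with the elementary inequality $(\sigma-2)^2\ge 0$ (which covers the case where $s_0$ falls in one branch while $\sigma(t_0)$ falls in the other) gives $F_0\le t_0\,\psi_2(t_0)$; dividing by $t_0$ yields $G(\cdot,t_0)\le\psi_2(t_0)$. The main obstacles I anticipate are not the algebra of the quadratic step but rather the cutoff/localization forced by completeness of $M$, and the boundary behavior as $s_0\to 0$, where hypothesis (B2) (with $\beta(0)>0$) is exactly what guarantees that $\rho$, hence $\sigma$, stays bounded and that $\lambda>0$, so that $\psi_2$ is finite; the bookkeeping in the last step reconciling the pointwise-in-time estimate with the global quantities $\sigma$ and $\lambda$ is the other place demanding care.
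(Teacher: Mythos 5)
Your proposal is correct and follows essentially the same route as the paper: the localized Li--Yau maximum principle applied to $F=t(\beta\|\nabla f\|^2-f_t)$, followed by an exact optimization over $\|\nabla f\|^2$ of the resulting quadratic inequality --- your discriminant analysis of the quadratic in $P$ is algebraically identical to the paper's computation of $\max_{Q\ge 0}\frac{aQ+1}{(1+bQ)^2}$, which produces the same dichotomy at threshold $2$. Your final bookkeeping (passing from the time-$s_0$ quantities to $\lambda$ and $\sigma$ via monotonicity of $x^2/(x-1)$ and $(\sigma-2)^2\ge 0$) likewise matches the paper's concluding step.
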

When $\beta$ is constant, Theorem \ref{thm-LY-g-2} gives us the following improvement of  \eqref{eq-Davies2}.
\begin{cor}\label{cor-LY-g-2}
Let the notation be the same as in Theorem \ref{thm-LY-g} with $k>0$. Then, for any constant $\beta\in (0,1)$, we have
\begin{equation}\label{eq-cor-LY-g-2}
\beta\|\nabla f\|^2-f_t\leq \left\{\begin{array}{ll} \frac{n}{2\beta t}&t< \frac{1-\beta}{k\beta }\\
\frac{n}{4\beta t}+\frac{nk}{4(1-\beta)}&t\geq \frac{1-\beta}{k\beta }\end{array}\right.
\end{equation}
on $M\times (0,T]$.
\end{cor}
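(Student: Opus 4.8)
The plan is to specialize Theorem~\ref{thm-LY-g-2} to the constant choice $\beta(s)\equiv\beta$ and then simplify the resulting bound $\psi_2$. First I would check the hypotheses: for a constant $\beta\in(0,1)$, condition (B1) is immediate, and (B2) holds because $\beta(0)=\beta>0$ and $1-\beta(0)=1-\beta>0$, so $(1-\beta(0))^2+\beta'(0)^2>0$.

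Next I would evaluate the two auxiliary functions appearing in the theorem. Since $\beta'\equiv0$ and $k>0$, we have $(2k\beta(s)+\beta'(s))_+=2k\beta$, and the map $s\mapsto \frac{2k\beta s}{1-\beta}$ is increasing, so
\[
\sigma(t)=\max_{s\in[0,t]}\frac{2k\beta s}{1-\beta}=\frac{2k\beta t}{1-\beta},\qquad \lambda(t)=\inf_{s\in[0,t]}\beta=\beta.
\]
In particular, the dichotomy $\sigma(t)<2$ versus $\sigma(t)\geq2$ is precisely $t<\frac{1-\beta}{k\beta}$ versus $t\geq\frac{1-\beta}{k\beta}$, matching the two cases of \eqref{eq-cor-LY-g-2}.

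Then I would read off each branch. When $t<\frac{1-\beta}{k\beta}$, Theorem~\ref{thm-LY-g-2} gives $\psi_2(t)=\frac{n}{2\lambda t}=\frac{n}{2\beta t}$, which is the first line. When $t\geq\frac{1-\beta}{k\beta}$, it gives $\psi_2(t)=\frac{n\sigma^2}{8(\sigma-1)\beta t}$. The only nontrivial step is the elementary inequality $\frac{\sigma^2}{\sigma-1}\leq\sigma+2$ for $\sigma\geq2$, which follows from $\frac{\sigma^2}{\sigma-1}-(\sigma+2)=\frac{2-\sigma}{\sigma-1}\leq0$. Combining it with $\frac{n\sigma}{8\beta t}=\frac{n}{8\beta t}\cdot\frac{2k\beta t}{1-\beta}=\frac{nk}{4(1-\beta)}$ yields $\psi_2(t)\leq\frac{n}{4\beta t}+\frac{nk}{4(1-\beta)}$, the second line.

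I do not expect a genuine obstacle: the corollary is a direct specialization of Theorem~\ref{thm-LY-g-2}, and the lone computation --- the bound $\frac{\sigma^2}{\sigma-1}\leq\sigma+2$ --- is elementary, with equality at the threshold $\sigma=2$. The only point needing care is that in the second regime one estimates $\psi_2$ from above rather than reproducing it exactly, so that branch of \eqref{eq-cor-LY-g-2} is an upper bound rather than an identity.
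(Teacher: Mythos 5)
Your proposal is correct and follows essentially the same route as the paper: specialize Theorem \ref{thm-LY-g-2} to constant $\beta$, identify $\sigma(t)=\frac{2k\beta t}{1-\beta}$ and $\lambda(t)=\beta$, and bound the second branch by an elementary inequality. The only cosmetic difference is that you prove $\frac{\sigma^2}{\sigma-1}\leq\sigma+2$ by direct algebra, whereas the paper obtains the equivalent bound by truncating the geometric series for $\bigl(1-\tfrac{1-\beta}{2k\beta t}\bigr)^{-1}$ using $\tfrac{1-\beta}{2k\beta t}\leq\tfrac12$.
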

It is clear that \eqref{eq-cor-LY-g-2} is better than \eqref{eq-cor-LY-g}. If we choose $\beta(t)=e^{-2kt}$, Theorem \ref{thm-LY-g-2} also gives us Hamilton's Li-Yau type gradient estimate \eqref{eq-Ha}. Moreover, if we choose $\beta(t)=e^{-2\theta kt}$ with $\theta\in (0,1)$ and $\beta(t)=\frac{1}{1+\theta kt}$, it is not hard to check that Theorem \ref{thm-LY-g-2} gives us improvements of \eqref{eq-Ha-g} and \eqref{eq-LY-linear} respectively.

Similar as before,
for any $t_0\in (0,T]$ and $\beta_0\in (0,1)$, let
\begin{equation}\label{eq-phi-2}
\begin{split}
\varphi_2(\beta_0,t_0)=\inf\{&\psi_2(t_0)\ |\\
\ \ \ \ \ \ \ \ \ \ \ \ & \beta\ \mbox{satisfies (B1), (B2) on }[0,t_0]\mbox{, and }\beta(t_0)=\beta_0. \}
\end{split}
\end{equation}
Then, by Theorem \ref{thm-LY-g-2},
\begin{equation}
\beta\|\nabla f\|^2-f_t\leq\varphi_2(\beta,t).
\end{equation}
By using $\beta(t)=1-\theta kt$ as a test function, we have the following different Li-Yau type gradient estimate.
\begin{cor}\label{cor-LY-g-3}
Let the notation be the same as in Theorem \ref{thm-LY-g} with $k>0$. Then, for any constant $\beta\in (0,1)$, we have
\begin{equation}\label{eq-cor-LY-g-3}
\beta\|\nabla f\|^2-f_t\leq \left\{\begin{array}{ll} \frac{n}{2\beta t}&t< \frac{3(1-\beta)}{2k}\\
\frac{3(1-\beta)n}{16k\beta t^2}+\frac{nk}{4(1-\beta)\beta}&t\geq \frac{3(1-\beta)}{2k}\end{array}\right.
\end{equation}
on $M\times (0,T]$.
\end{cor}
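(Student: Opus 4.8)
The plan is to derive Corollary \ref{cor-LY-g-3} directly from Theorem \ref{thm-LY-g-2} by feeding in the affine test function $\beta(s)=1-\theta ks$ and fixing its single free parameter through the endpoint constraint. I would fix $t>0$ and the target constant $\beta\in(0,1)$; to realize $\beta(t)=\beta$ one is forced to take $\theta=\frac{1-\beta}{kt}$. The first step is to check admissibility. Condition (B1) holds on $(0,t]$ since there $\beta(s)=1-\theta ks<1$, while $\beta(s)>0$ because $\theta kt=1-\beta<1$; condition (B2) holds because $\beta(0)=1>0$ and $\beta'(0)=-\theta k\neq0$, so $(1-\beta(0))^2+\beta'(0)^2=\theta^2k^2>0$.

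Next I would compute the two auxiliary quantities of Theorem \ref{thm-LY-g-2}. Since $\beta(s)$ is decreasing, $\lambda(t)=\inf_{s\in[0,t]}\beta(s)=\beta(t)=\beta$. For $\sigma(t)$ a short computation gives $2k\beta(s)+\beta'(s)=k(2-\theta-2\theta ks)$ and $1-\beta(s)=\theta ks$, so on the range where the numerator is positive
\begin{equation*}
\frac{(2k\beta(s)+\beta'(s))_+s}{1-\beta(s)}=\frac{2-\theta}{\theta}-2ks,
\end{equation*}
which is decreasing in $s$ and vanishes once $2k\beta(s)+\beta'(s)\le0$. Hence the maximum over $s\in[0,t]$ is attained in the limit $s\to0^+$, giving $\sigma(t)=\frac{2}{\theta}-1=\frac{2kt}{1-\beta}-1$. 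The dichotomy $\sigma(t)<2$ versus $\sigma(t)\ge2$ then translates exactly into $t<\frac{3(1-\beta)}{2k}$ versus $t\ge\frac{3(1-\beta)}{2k}$, the threshold appearing in \eqref{eq-cor-LY-g-3}.

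In the subcritical range $\sigma(t)<2$, Theorem \ref{thm-LY-g-2} yields $\psi_2(t)=\frac{n}{2\lambda t}=\frac{n}{2\beta t}$, the first line of \eqref{eq-cor-LY-g-3}. In the supercritical range I would substitute $\sigma(t)=\frac{2kt}{1-\beta}-1$ into $\psi_2(t)=\frac{n\sigma^2}{8(\sigma-1)\lambda t}$ and then bound the resulting rational expression by the cleaner quantity $\frac{3(1-\beta)n}{16k\beta t^2}+\frac{nk}{4(1-\beta)\beta}$. Writing $x=kt$ and $b=1-\beta$, this last step reduces, after clearing the positive denominators $x^2 b(x-b)$, to an elementary polynomial inequality whose difference factors as $b^2(2x-3b)\ge0$, which holds precisely for $x\ge\frac{3b}{2}$, i.e.\ on the supercritical range, with equality at the threshold so the two branches match continuously. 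I expect this final algebraic step—correctly identifying $\sigma(t)$ as the $s\to0^+$ limit of a decreasing, positive-part-truncated ratio, and then verifying that the stated bound dominates $\frac{\sigma^2}{\sigma-1}$—to be the only point requiring genuine care, the remainder being a direct application of Theorem \ref{thm-LY-g-2}.
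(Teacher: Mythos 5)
Your proposal is correct and follows essentially the same route as the paper: plug the affine test function $\beta(s)=1-\theta ks$ with $\theta=\frac{1-\beta}{kt}$ into Theorem \ref{thm-LY-g-2}, compute $\lambda(t)=\beta$ and $\sigma(t)=\frac{2}{\theta}-1$ (the supremum of the decreasing ratio as $s\to 0^+$), and split at $\sigma=2$, i.e.\ $t=\frac{3(1-\beta)}{2k}$. The only cosmetic difference is in the last algebraic step, where the paper bounds $\frac{(2-\theta)^2}{1-\theta}=4+\theta^2+\theta^3+\cdots\leq 4+3\theta^2$ via a geometric series while you clear denominators and factor the difference as $b^2(2x-3b)$; both reduce to the same inequality $\theta^2(2-3\theta)\geq 0$.
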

This estimate is weaker than \eqref{eq-Davies2} when $t$ tends to infinity. From this, one can see that Theorem \ref{thm-LY-g} produces stronger estimates than that of Theorem \ref{thm-LY-g-2} in some cases.

Finally, we would like to remark that although the leading term $\frac{n}{2\beta t}$ of the Li-Yau type estimates for small time in \eqref{eq-cor-LY-g}, \eqref{eq-cor-LY-g-2} is not sharp comparing to \eqref{eq-Ha}, \eqref{eq-LX} and \eqref{eq-LX-linear} where the leading term is $\frac{n}{2t}$, and the asymptotic behaviors as $t\to \infty$ of  $\eqref{eq-cor-LY-g}, \eqref{eq-cor-LY-g-2}$ are the same as those of \eqref{eq-Davies}, \eqref{eq-LX}, the estimates \eqref{eq-cor-LY-g}, \eqref{eq-cor-LY-g-2} give slight improvements of Davies' Li-Yau type estimate \eqref{eq-Davies} for all time. The comparison of \eqref{eq-LX} and \eqref{eq-Davies} for all time is not quite clear.
\section{Li-Yau type gradient estimate}
We first prove Theorem \ref{thm-LY-g}.

\begin{proof}[Proof of Theorem \ref{thm-LY-g}] Because the proof of the compact case is similar and simpler, we only prove the complete noncompact case.

Let $F=t(\beta\|\nabla f\|^2-f_t)$, and $L=\Delta-\p_t$. Then,
\begin{equation}\label{eq-f}
Lf=-\|\nabla f\|^2,
\end{equation}
\begin{equation}\label{eq-ft}
Lf_t=-(\|\nabla f\|^2)_t=-2\vv<\nabla f_t,\nabla f>,
\end{equation}
and
\begin{equation}\label{eq-grad-f}
\begin{split}
&L(\|\nabla f\|^2)\\
=&2\|\nabla^2f\|^2+2\vv<\nabla\Delta f,\nabla f>+2Ric(\nabla f,\nabla f)-2\vv<\nabla f_t,\nabla f>\\
\geq&\frac{2}{n}(\Delta f)^2-2k\|\nabla f\|^2-2\vv<\nabla\|\nabla f\|^2,\nabla f>.
\end{split}
\end{equation}
Then, by \eqref{eq-ft} and \eqref{eq-grad-f},
\begin{equation}\label{eq-F}
\begin{split}
&LF\\
\geq&\frac{2\beta t}{n}(\Delta f)^2-(2k\beta+\beta') t\|\nabla f\|^2-2\vv<\nabla F,\nabla f>-\frac{F}{t}\\
=&\frac{2\beta t}{n}(\|\nabla f\|^2-f_t)^2-(2k\beta+\beta') t\|\nabla f\|^2-2\vv<\nabla F,\nabla f>-\frac{F}{t}\\
=&\frac{2\beta t}{n}\left(\frac{F}{ t}+\left(1-\beta\right)\|\nabla f\|^2\right)^2-(2k\beta+\beta') t\|\nabla f\|^2-2\vv<\nabla F,\nabla f>-\frac{F}{t}.\\
\end{split}
\end{equation}

Let $\eta$ be a smooth function on $[0,\infty)$ such that
\begin{enumerate}
\item $0\leq\eta\leq 1$;
\item $\eta'\leq 0$;
\item $\eta(t)=1$ on $t\in [0,1]$;
\item $\eta(t)=0$ on $t\in [2,\infty)$.
\end{enumerate}
Let $\rho=\eta^2(r(x)/R)$ where $r(x)=r(x,p)$ where $p$ is a fixed point in $M$. Then, by the Laplacian comparison theorem,
\begin{equation}\label{eq-grad-rho}
\|\nabla \rho\|^2\leq C_1 R^{-2}\rho
\end{equation}
and
\begin{equation}\label{eq-Lap-rho}
\Delta\rho\geq -C_1R^{-1}
\end{equation}
where $C_1>1$  depends on $n$.

Let $G=\rho F$. For any $t>0$, let
$(x_0,t_0)$ be the maximum point of $G$ on $M\times [0,t]$. Since $G(x,0)=0$, we can assume that $G(x_0,t_0)>0$, and hence $x_0\in B_p(2R)$ and $t_0>0$. Moreover, by the Calabi trick (see \cite{LY}), we can assume that $x_0$ is not a cut point of $p$. So,
\begin{equation}
\nabla G(x_0,t_0)=0,
\end{equation}
and hence
\begin{equation}\label{eq-grad-F}
  \nabla F(x_0,t_0)=-F\rho^{-1}\nabla\rho,
\end{equation}
and
\begin{equation}
LG(x_0,t_0)\leq 0.
\end{equation}
So, by \eqref{eq-F}, at the point $(x_0,t_0)$, we have
\begin{equation}
\begin{split}
0\geq&LG(x_0,t_0)\\
=&\rho LF+F\Delta\rho+2\vv<\nabla F,\nabla\rho>\\
\geq &\frac{2\beta t}{n}\left(\frac{1}{ t}+\left(1-\beta\right)Q\right)^2\rho F^2-((2k\beta +\beta')_+tQ+1/t)\rho F\\
&-2\vv<\rho\nabla F,\nabla f>+2\vv<\nabla F,\nabla\rho>+F\Delta\rho,\\
\end{split}
\end{equation}
where $Q=F^{-1}\|\nabla f\|^2(x_0,t_0)$. Multiplying $t_0\rho(x_0)$  to the last inequality, and noting that $0\leq \rho\leq 1$, \eqref{eq-grad-rho}, \eqref{eq-Lap-rho} and \eqref{eq-grad-F}, we have, at the point $(x_0,t_0)$,
\begin{equation}\label{eq-G-2}
\begin{split}
0\geq&\frac{2\beta }{n}\left(1+\left(1-\beta\right)Qt\right)^2G^2-((2k\beta +\beta')_+t^2Q+1)G\\
&-2C_1R^{-1}Q^\frac12t G^\frac32-3C_1R^{-1}tG\\
\geq&\frac{2\beta }{n}\left(1+\left(1-\beta\right)Qt\right)^2G^2-((2k\beta +\beta')_+t^2Q+1)G\\
&-C_1R^{-1}G^2-C_1R^{-1}Qt^2 G-3C_1R^{-1}tG.\\
\end{split}
\end{equation}
By (B2), we know that $\min_{[0,T]}\beta>0$. Hence, when $R$ is large enough, we have
\begin{equation}
\frac{2\beta }{n}\left(1+\left(1-\beta\right)Qt_0\right)^2-C_1R^{-1}>0.
\end{equation}
Then, by \eqref{eq-G-2}, when $R$ is sufficiently large,
\begin{equation}\label{eq-G-3}
G(x_0,t_0)\leq \frac{n}{2\beta}\frac{[(2k\beta +\beta')_++C_1R^{-1}]t_0^2Q+3C_1R^{-1}t_0+1}{(1+(1-\beta)t_0Q)^2-C_2R^{-1}}
\end{equation}
where $C_2=\frac{nC_1}{2\min_{[0,T]}\beta}$. Note that
\begin{equation}
\frac{1}{a-\epsilon}\leq \frac{1+2\epsilon}{a}
\end{equation}
when $a\geq1$ and $\epsilon\leq1/2$. By \eqref{eq-G-3}, when $R$ is sufficiently large, we have
\begin{equation}\label{eq-G-4}
G(x_0,t_0)\leq \left(1+\frac{2C_2}{R}\right)\frac{n}{2\beta}\frac{[(2k\beta +\beta')_++C_1R^{-1}]t_0^2Q+3C_1R^{-1}t_0+1}{(1+(1-\beta)t_0Q)^2}.
\end{equation}
Moreover, note that
\begin{equation}
\frac{aQ+c}{(1+bQ)^2}\leq\frac{a}{4b}+c
\end{equation}
where $a,b,c,Q>0$, and by (B2) ,
\begin{equation}
\frac{t}{1-\beta}\leq C_3
\end{equation}
for $t\in [0,T]$. Applying this to \eqref{eq-G-4} and by (B2), we have
\begin{equation}
\begin{split}
G(x,t)\leq &G(x_0,t_0)\\
\leq& \left(1+\frac{2C_2}{R}\right)\frac{n}{2\beta}\left(\frac{(2k\beta+\beta')_+t_0}{4(1-\beta)}+C_4R^{-1}+3C_1R^{-1}t_0+1\right)\\
\leq&\left(1+\frac{2C_2}{R}\right)\left(\frac{n}{2}\left(\frac{(2k\beta+\beta')_+t_0}{4\beta(1-\beta)}+\frac1\beta\right)+C_5R^{-1}+3C_2R^{-1}T\right)\\
\leq&\left(1+\frac{2C_2}{R}\right)\left(t\psi_1(t)+C_5R^{-1}+3C_2R^{-1}T\right),\\
\end{split}
\end{equation}
where $C_4=\frac{C_1C_3}{4}$ and $C_5=\frac{nC_4}{2\min_{[0,T]}\beta}$. This implies that, when $x\in B_p(R)$ with $R$ sufficiently large,
\begin{equation}
F(x,t)\leq \left(1+\frac{2C_2}{R}\right)\left(t\psi_1(t)+C_5R^{-1}+3C_2R^{-1}T\right).
\end{equation}
Letting $R\to\infty$ in the last inequality, we complete the proof of the theorem.
\end{proof}
By choosing $\beta(t)=e^{-2\theta kt}$ with $\theta\in (0,1]$ in Theorem \ref{thm-LY-g}, we have the following extension of Hamilton's estimate \eqref{eq-Ha}.
\begin{cor}
  Let the notation be the same as in Theorem \ref{thm-LY-g} with $k>0$. Then,
\begin{equation}
e^{-2\theta kt}\|\nabla f\|^2-f_t\leq \frac{n}{2t}e^{2\theta kt}+\frac{nk(1-\theta)}{4(1-e^{-2\theta kt})}
\end{equation}
for any $\theta\in (0,1]$.
\end{cor}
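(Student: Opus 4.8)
The plan is to apply Theorem \ref{thm-LY-g} directly with the test function $\beta(t)=e^{-2\theta kt}$ and then simplify $\psi_1$. First I would check the hypotheses. For $\theta\in(0,1]$ and $k>0$ one has $0<e^{-2\theta kt}<1$ on $(0,T]$, so (B1) holds; moreover $\beta(0)=1>0$ and $\beta'(0)=-2\theta k\neq0$, so $(1-\beta(0))^2+\beta'(0)^2>0$ and (B2) holds as well. Thus the theorem applies and gives $e^{-2\theta kt}\|\nabla f\|^2-f_t\leq\psi_1(t)$, and it only remains to evaluate $\psi_1$ in closed form.

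Next I would compute the expression being maximized. Since $\beta'(s)=-2\theta k\beta(s)$, we get $2k\beta(s)+\beta'(s)=2k(1-\theta)\beta(s)\geq0$, so the positive part equals the quantity itself. Using $1/\beta(s)=e^{2\theta ks}$ and cancelling the factor $\beta(s)$ in the numerator and denominator of the second term, the function inside the maximum becomes
\[
g(s)=e^{2\theta ks}+\frac{k(1-\theta)s}{2\left(1-e^{-2\theta ks}\right)}.
\]
I would then argue that $\max_{s\in[0,t]}g(s)=g(t)$, for which it suffices to show $g$ is nondecreasing. The first summand is clearly increasing. For the second, the substitution $x=2\theta ks$ reduces the claim to the monotonicity of $\phi(x)=\frac{x}{1-e^{-x}}$ on $(0,\infty)$. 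This is the one genuine computation: $\phi'(x)$ has the same sign as $N(x)=1-(1+x)e^{-x}$, and since $N(0)=0$ with $N'(x)=xe^{-x}>0$ for $x>0$, we get $N>0$, hence $\phi'>0$. Therefore $g$ is increasing, so
\[
\psi_1(t)=\frac{n}{2t}\,g(t)=\frac{n}{2t}e^{2\theta kt}+\frac{nk(1-\theta)}{4\left(1-e^{-2\theta kt}\right)},
\]
which is precisely the asserted bound.

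Because the corollary is a direct specialization of the already-proved Theorem \ref{thm-LY-g}, I expect no real obstacle. The only nonroutine ingredient is the elementary monotonicity of $\phi(x)=x/(1-e^{-x})$, which is exactly what guarantees that the maximum defining $\psi_1$ is attained at the endpoint $s=t$; without it one would be left with a bound in terms of $\max_{[0,t]}g$ rather than the clean closed form. As a sanity check I would note the case $\theta=1$, where $2k\beta+\beta'\equiv0$ so the second term vanishes identically and the estimate reduces to Hamilton's bound \eqref{eq-Ha}.
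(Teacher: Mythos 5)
Your proposal is correct and follows essentially the same route as the paper: both specialize Theorem \ref{thm-LY-g} to $\beta(t)=e^{-2\theta kt}$, observe $2k\beta+\beta'=2k(1-\theta)\beta\geq 0$, and reduce the evaluation of $\psi_1$ to the monotonicity of $x/(1-e^{-x})$, proved by the identical computation (your $N(x)=1-(1+x)e^{-x}$ is the paper's $1-e^{-y}-ye^{-y}$). No gaps.
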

\begin{proof}
It is clear that $\beta(t)=e^{-2\theta kt}$ satisfies (B1) and (B2). Note that
\begin{equation}\label{eqn-mono-1}
\frac{1}{\beta}+\frac{(2k\beta+\beta')_+t}{4\beta(1-\beta)}=e^{\theta x}+\frac{(1-\theta )x}{4(1-e^{-\theta x})}
\end{equation}
where $x=2kt$. Moreover
\begin{equation}\label{eq-mono-2}
\left(\frac{(1-\theta )x}{4(1-e^{-\theta x})}\right)'=\frac{(1-\theta)(1-e^{-y}-ye^{-y})}{4(1-e^{-y})^2}
\end{equation}
where $y=\theta x$. Note that
\begin{equation}
(1-e^{-y}-ye^{-y})'=ye^{-y}\geq 0
\end{equation}
where $y\geq 0$. Hence
\begin{equation}
1-e^{-y}-ye^{-y}\geq 0
\end{equation}
when $y\geq 0$. Then by \eqref{eqn-mono-1} and \eqref{eq-mono-2}, we know that $\frac{1}{\beta}+\frac{(2k\beta+\beta')_+t}{4\beta(1-\beta)}$ is increasing. So,
\begin{equation}
\psi_1(t)=\frac{n}{2t}\left(\frac{1}{\beta}+\frac{(2k\beta+\beta')_+t}{4\beta(1-\beta)}\right)=\frac{n}{2t}e^{2\theta kt}+\frac{nk(1-\theta)}{4(1-e^{-2\theta kt})}.
\end{equation}
This completes the proof of the corollary.
\end{proof}
By choosing $\beta(t)=\frac{1}{1+\theta k t}$ with $\theta>0$ in Theorem \ref{thm-LY-g}, we can obtain \eqref{eq-LY-linear}.
\begin{cor}
Let the notation be the same as in Theorem \ref{thm-LY-g} with $k>0$. Then, for any $\theta>0$,
\begin{equation}
  \frac{1}{1+\theta kt}\|\nabla f\|^2-f_t\leq \frac{n(1+\theta kt)}{2t}+\frac{n(2-\theta+2\theta kt)_+}{8\theta t}.
\end{equation}
\end{cor}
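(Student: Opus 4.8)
The plan is to apply Theorem~\ref{thm-LY-g} directly with the test function $\beta(t)=\frac{1}{1+\theta kt}$ and to compute the resulting $\psi_1(t)$ in closed form. First I would check hypotheses (B1) and (B2). Since $k>0$, $\theta>0$ and $t>0$ force $1+\theta kt>1$, we get $0<\beta(t)<1$ on $(0,T]$, which is (B1). For (B2), note $\beta(0)=1>0$ and $\beta'(t)=-\frac{\theta k}{(1+\theta kt)^2}$, so $\beta'(0)=-\theta k\neq 0$; hence $(1-\beta(0))^2+\beta'(0)^2=(\theta k)^2>0$ and (B2) holds.

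Next I would compute the quantity inside the maximum defining $\psi_1$. Writing everything over the common denominator $(1+\theta ks)^2$, one finds $\frac{1}{\beta(s)}=1+\theta ks$, $2k\beta(s)+\beta'(s)=\frac{k(2-\theta+2\theta ks)}{(1+\theta ks)^2}$, $1-\beta(s)=\frac{\theta ks}{1+\theta ks}$, and $4\beta(s)(1-\beta(s))=\frac{4\theta ks}{(1+\theta ks)^2}$. The key step is the cancellation: since the always-positive factor $\frac{k}{(1+\theta ks)^2}$ multiplies $2-\theta+2\theta ks$, the positive part pulls out as $(2k\beta(s)+\beta'(s))_+=\frac{k(2-\theta+2\theta ks)_+}{(1+\theta ks)^2}$, and after dividing by $4\beta(s)(1-\beta(s))$ and multiplying by $s$ both $(1+\theta ks)^2$ and $ks$ drop out, leaving
\begin{equation*}
\frac{1}{\beta(s)}+\frac{(2k\beta(s)+\beta'(s))_+s}{4\beta(s)(1-\beta(s))}=(1+\theta ks)+\frac{(2-\theta+2\theta ks)_+}{4\theta}.
\end{equation*}

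Finally I would observe that this expression, call it $g(s)$, is nondecreasing in $s$: the term $1+\theta ks$ is increasing, and $2-\theta+2\theta ks$ is increasing so its positive part is nondecreasing. Hence $\max_{s\in[0,t]}g(s)=g(t)$, and multiplying by $\frac{n}{2t}$ gives $\psi_1(t)=\frac{n(1+\theta kt)}{2t}+\frac{n(2-\theta+2\theta kt)_+}{8\theta t}$, which is exactly the asserted bound \eqref{eq-LY-linear}. I expect the only delicate point to be the bookkeeping around the positive part $(\cdot)_+$—in particular confirming that the sign factor carrying it is precisely $2-\theta+2\theta ks$ after the cancellation, and that $g$ is genuinely monotone so the maximum sits at the endpoint $s=t$—rather than any analytic difficulty, since the entire statement reduces to the already-proven Theorem~\ref{thm-LY-g}.
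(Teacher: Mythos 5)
Your proposal is correct and follows exactly the paper's own route: verify (B1)--(B2) for $\beta(t)=\frac{1}{1+\theta kt}$, simplify $\frac{1}{\beta}+\frac{(2k\beta+\beta')_+t}{4\beta(1-\beta)}$ to $1+\theta kt+\frac{(2-\theta+2\theta kt)_+}{4\theta}$, and use its monotonicity to place the maximum at the endpoint, yielding $\psi_1(t)$ as stated. Your computation of the cancellation and the handling of the positive part match the paper's argument (the paper merely states the simplified form without showing the intermediate steps you supply).
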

\begin{proof}Note that $\beta(t)=\frac{1}{1+\theta kt}$ with $\theta>0$ satisfies (B1) and (B2). Moreover,
\begin{equation}
\begin{split}
&\frac{1}{\beta}+\frac{(2k\beta+\beta)_+t}{4\beta(1-\beta)}\\
=&1+\theta k t+\frac{((2-\theta)+2\theta kt)_+}{4\theta}
\end{split}
\end{equation}
is increasing. So,
\begin{equation}
\begin{split}
\psi_1(t)=\frac{n}{2t}\left(\frac{1}{\beta}+\frac{(2k\beta+\beta')_+t}{4\beta(1-\beta)}\right)=\frac{n(1+\theta kt)}{2t}+\frac{n(2-\theta+2\theta kt)_+}{8\theta t}.
\end{split}
\end{equation}
This completes the proof of the corollary.
\end{proof}
Next, we give an upper bound of $\varphi_1(\beta,t)$ (see \eqref{eq-phi}) by using the test function $\beta(t)=1-\theta kt$.
\begin{cor}
Let the notation be the same as in Theorem \ref{thm-LY-g} with $k>0$. Then
\begin{equation}
\varphi_1(\beta,t)\leq\left\{\begin{array}{ll}\frac{n}{2\beta t}& t< \frac{1-\beta}{2k\beta}\\
\frac{3n}{8\beta t}+\frac{nk}{4(1-\beta)}&t\geq\frac{1-\beta}{2k\beta}
\end{array}\right.
\end{equation}
and hence, by Theorem \ref{thm-LY-g},
\begin{equation}
\beta\|\nabla f\|^2-f_t\leq\left\{\begin{array}{ll}\frac{n}{2\beta t}& t< \frac{1-\beta}{2k\beta}\\
\frac{3n}{8\beta t}+\frac{nk}{4(1-\beta)}&t\geq\frac{1-\beta}{2k\beta}
\end{array}\right.
\end{equation}
for any $\beta\in (0,1)$.
\end{cor}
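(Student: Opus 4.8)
The plan is to bound $\varphi_1(\beta_0,t_0)$ from above by exhibiting a single admissible competitor in the infimum \eqref{eq-phi}, namely the affine function $\beta(t)=1-\theta k t$ with the slope $\theta$ chosen so that the terminal constraint $\beta(t_0)=\beta_0$ holds; this forces $\theta=\frac{1-\beta_0}{kt_0}>0$. First I would check admissibility: on $(0,t_0]$ the function decreases from $1$ to $\beta_0$, so $0<\beta_0\le\beta(t)<1$, giving (B1); and since $\beta(0)=1$ while $\beta'(0)=-\theta k\neq0$, we have $(1-\beta(0))^2+\beta'(0)^2=\theta^2k^2>0$ together with $\beta(0)=1>0$, giving (B2). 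Thus this $\beta$ is a legitimate test function, and by the definition \eqref{eq-phi} it suffices to compute $\psi_1(t_0)$ for this choice.

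Next I would analyze the quantity
\[
h(s)=\frac{1}{\beta(s)}+\frac{(2k\beta(s)+\beta'(s))_+\,s}{4\beta(s)(1-\beta(s))},
\]
whose maximum over $s\in[0,t_0]$, scaled by $\frac{n}{2t_0}$, defines $\psi_1(t_0)$. Substituting $\beta(s)=1-\theta k s$ gives $1-\beta(s)=\theta k s$ and $2k\beta(s)+\beta'(s)=k(2-\theta-2\theta k s)$, which is decreasing in $s$ and changes sign at $s^\ast=\frac{2-\theta}{2\theta k}$. On the region where the positive part vanishes one has $h(s)=\frac{1}{1-\theta k s}$, manifestly increasing; on the region where it is active a short computation collapses $h$ to $\frac{3\theta+2-2\theta k s}{4\theta(1-\theta k s)}$, whose derivative in $s$ I expect to be a positive multiple of $3\theta$, hence again increasing, with the two expressions agreeing at $s^\ast$. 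The crux of the argument, and the step I expect to be the main obstacle, is precisely this: verifying that $h$ is monotonically increasing across the kink introduced by the positive part, so that the maximum defining $\psi_1(t_0)$ is attained at the endpoint $s=t_0$ rather than at an interior point.

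Granting monotonicity, it remains to evaluate $h(t_0)$ and split according to the sign of $2k\beta(t_0)+\beta'(t_0)=k(2\beta_0-\theta)$. Using $\theta=\frac{1-\beta_0}{kt_0}$, the sign condition $\theta\lessgtr 2\beta_0$ translates exactly into the threshold $t_0\gtrless\frac{1-\beta_0}{2k\beta_0}$. When $t_0<\frac{1-\beta_0}{2k\beta_0}$ the positive part vanishes at $t_0$ and $h(t_0)=\frac{1}{\beta_0}$, so $\psi_1(t_0)=\frac{n}{2\beta_0 t_0}$; when $t_0\ge\frac{1-\beta_0}{2k\beta_0}$ the active formula gives $h(t_0)=\frac{3\theta+2\beta_0}{4\theta\beta_0}$, and inserting $\theta=\frac{1-\beta_0}{kt_0}$ and simplifying yields $\psi_1(t_0)=\frac{3n}{8\beta_0 t_0}+\frac{nk}{4(1-\beta_0)}$. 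These are exactly the two branches of the claimed bound on $\varphi_1$, and the final gradient estimate then follows immediately by combining with Theorem \ref{thm-LY-g} and renaming $(\beta_0,t_0)$ as $(\beta,t)$.
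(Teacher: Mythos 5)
Your proposal is correct and takes essentially the same route as the paper's proof: the same test function $\beta(t)=1-\theta kt$ with $\theta=\frac{1-\beta_0}{kt_0}$, the same verification that $\frac{1}{\beta}+\frac{(2k\beta+\beta')_+t}{4\beta(1-\beta)}$ is increasing (the paper rewrites it as the maximum of two increasing functions, you argue piecewise monotonicity with the two branches agreeing at the kink, which is equivalent), and the same endpoint evaluation with the case split at $t_0=\frac{1-\beta_0}{2k\beta_0}$. All your computations check out, including the derivative $\frac{3\theta k}{4(1-\theta ks)^2}$ on the active branch and the final values of the two branches.
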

\begin{proof}
For any given $\beta_0\in (0,1)$ and $t_0>0$, let $\theta_0>0$ be such that
\begin{equation}
\beta_0=1-\theta_0 kt_0.
\end{equation}
Let $\beta(t)=1-\theta_0 kt$. Note that
\begin{equation}
\begin{split}
&\frac{1}{\beta}+\frac{(2k\beta+\beta')_+t}{4\beta(1-\beta)}\\
=&\frac{1}{1-\theta_0 kt}+\max\left\{\frac{1}{2\theta_0}-\frac{1}{4(1-\theta_0 kt)},0\right\}\\
=&\max\left\{\frac{1}{2\theta_0}+\frac{3}{4(1-\theta_0 kt)},\frac{1}{1-\theta_0 kt}\right\}
\end{split}
\end{equation}
is increasing on $[0,t_0]$. So,
 \begin{equation}
\begin{split}
\varphi_1(\beta_0,t_0)\leq& \psi_1(t_0)=\frac{n}{2t_0}\left(\frac{1}{\beta_0}+\frac{(2k\beta_0+\beta')_+t_0}{4\beta_0(1-\beta_0)}\right)\\
=&\frac{n}{2\beta_0t_0}+\frac{n}{4(1-\beta_0)}\left(k-\frac{1-\beta_0}{2\beta_0t_0}\right)_+.
\end{split}
\end{equation}
This completes the proof of the corollary.
\end{proof}

Finally, we come to prove Theorem \ref{thm-LY-g-2}, Corollary \ref{cor-LY-g-2} and Corollary \ref{cor-LY-g-3}. The arguments are similar as before.
\begin{proof}[Proof of Theorem \ref{thm-LY-g-2}]The same as in the proof of Theorem \ref{thm-LY-g}, we only prove the complete noncompact case. Let $F$ and $G$ be the same as in the proof of Theorem \ref{thm-LY-g}.

 Note that the maximum of the function $h(Q)=\frac{aQ+1}{(1+bQ)^2}$ with $a, b>0$ and $Q\geq 0$ is
\begin{equation}\label{eq-max-h}
\max_{[0,\infty)} h=\left\{\begin{array}{ll}1&\frac ab<2\\\frac{a^2}{4(a-b)b}&\frac ab\geq 2.
\end{array}\right.
\end{equation}
Let
\begin{equation}
a_R(t)=\frac{[(2k\beta+\beta')_++C_1R^{-1}]t^2}{1+3C_1R^{-1}t}
\end{equation}
and
\begin{equation}
b(t)=(1-\beta)t,
\end{equation}
and
\begin{equation}
\sigma_R(t)=\max_{s\in [0,t]}\frac{a_R(s)}{b(s)}.
\end{equation}
Note that $\sigma_R(t)$ tends to $\sigma(t)$ as $R$ tending to $+\infty$. By applying \eqref{eq-max-h} to the right hand side of \eqref{eq-G-4} with $a=a_R(t_0)$ and $b=b(t_0)$, we have
\begin{equation}\label{eq-G-5}
\begin{split}
G(x_0,t_0)\leq& \left\{\begin{array}{ll}\left(1+2C_2R^{-1}\right)(1+3C_1R^{-1}t_0)\frac{n}{2\beta(t_0)}&\sigma(t)<2\\
\left(1+2C_2R^{-1}\right)(1+3C_1R^{-1}t_0)\frac{n\sigma_{R}^2(t)}{8(\sigma_R(t)-1)\beta(t_0)}&\sigma(t)\geq 2\end{array}\right.\\
\leq&\left\{\begin{array}{ll}\left(1+2C_2R^{-1}\right)(1+3C_1R^{-1}t)\frac{n}{2\lambda(t)}&\sigma(t)<2\\
\left(1+2C_2R^{-1}\right)(1+3C_1R^{-1}t)\frac{n\sigma_{R}^2(t)}{8(\sigma_R(t)-1)\lambda(t)}&\sigma(t)\geq 2\end{array}\right.\\
\end{split}
\end{equation}
when $R$ is sufficiently large. This implies that, when $x\in B_p(R)$ with $R$ sufficiently large,
\begin{equation}
F\leq\left\{\begin{array}{ll}\left(1+2C_2R^{-1}\right)(1+3C_1R^{-1}t)\frac{n}{2\lambda(t)}&\sigma(t)<2\\
\left(1+2C_2R^{-1}\right)(1+3C_1R^{-1}t)\frac{n\sigma_{R}^2(t)}{8(\sigma_R(t)-1)\lambda(t)}&\sigma(t)\geq 2\end{array}\right.\\
\end{equation}
Letting $R\to\infty$ in the last inequality, we complete the proof of the theorem.
\end{proof}
\begin{proof}[Proof of Corollary \ref{cor-LY-g-2}] When $\beta$ is a constant in
$(0,1)$, it is clear that $\sigma(t)=\frac{2k\beta t}{1-\beta}$ and $\lambda(t)=\beta$. So, by Theorem \ref{thm-LY-g-2}, when $t<\frac{1-\beta}{k\beta}$, that is, $\sigma(t)<2$, we have
\begin{equation}
\beta\|\nabla f\|^2-f_t\leq \frac{n}{2\beta t}.
\end{equation}
Moreover, when $t\geq\frac{1-\beta}{k\beta}$, we have
\begin{equation}
\begin{split}
\beta\|\nabla f\|^2-f_t\leq&\frac{n\left(\frac{2k\beta t}{1-\beta}\right)^2}{8(\frac{2k\beta t}{1-\beta}-1)\beta t}\\
=&\frac{nk}{4(1-\beta)}\cdot\frac{1}{1-\frac{1-\beta}{2k\beta t}}\\
=&\frac{nk}{4(1-\beta)}\left(1+\frac{1-\beta}{2k\beta t}+\left(\frac{1-\beta}{2k\beta t}\right)^2+\cdots\right)\\
\leq&\frac{nk}{4(1-\beta)}\left(1+\frac{1-\beta}{k\beta t}\right)\\
=&\frac{n}{4\beta t}+\frac{nk}{4(1-\beta)}
\end{split}
\end{equation}
by noting that $\frac{1-\beta}{2k\beta t}\leq\frac{1}{2}$.
\end{proof}
\begin{proof}[Proof of Corollary \ref{cor-LY-g-3}] For $\beta_0\in (0,1)$ and $t_0\in (0,T]$, let $\theta_0>0$ be such that
\begin{equation}
\beta_0=1-\theta_0kt_0.
\end{equation}
Let $\beta(t)=1-\theta_0 kt$. It is clear that $\lambda(t)=1-\beta_0 kt$. By direct computation, one has
\begin{equation}
\sigma(t)=\left(\frac{2}{\theta_0}-1\right)_+
\end{equation}
When $t_0<\frac{3(1-\beta_0)}{2k}$, we have $\theta_0>\frac{2}{3}$, so $\sigma(t)<2$ for any $t\in [0,t_0]$. By Theorem \ref{thm-LY-g-2},
\begin{equation}
\psi_2(t_0)\leq \frac{n}{2\beta_0t_0}.
\end{equation}
On the other hand, when $t_0\geq\frac{3(1-\beta_0)}{2k}$, that is $\theta_0\leq \frac{2}3$, we have $\sigma(t)\geq2$ for any $t\in [0,t_0]$. By Theorem \ref{thm-LY-g-2},
\begin{equation}
\begin{split}
\psi_2(t_0)\leq& \frac{n(\frac{2}{\theta_0}-1)^2}{8(\frac{2}{\theta_0}-2)\beta_0t_0}\\
=&\frac{nk}{16\beta_0(1-\beta_0)}\frac{(2-\theta_0)^2}{1-\theta_0}\\
=&\frac{nk}{16\beta_0(1-\beta_0)}\left(4+\theta_0^2+\theta_0^3+\cdots\right)\\
\leq&\frac{nk}{16\beta_0(1-\beta_0)}\left(4+3\theta_0^2\right)\\
=&\frac{3(1-\beta_0)n}{16k\beta_0 t_0^2}+\frac{nk}{4(1-\beta_0)\beta_0}
\end{split}
\end{equation}
by substituting $\theta_0=\frac{1-\beta_0}{kt_0}$  into the inequality and noting that $\theta_0\leq \frac{2}{3}$.
\end{proof}


\begin{thebibliography}{99}
\bibitem{BB}Bakry D., Bolley F., Gentil I.{\sl The Li-Yau inequality and applications under a curvature-dimension condition.} Ann. Inst. Fourier (Grenoble) 67 (2017), no. 1, 397--421.
\bibitem{BQ}Bakry Dominique, Qian Zhongmin M.,{\sl Harnack inequalities on a manifold with positive or negative Ricci curvature.} Rev. Mat. Iberoamericana 15 (1999), no. 1, 143--179.
\bibitem{BL}Bakry D., Ledoux M., {\sl A logarithmic Sobolev form of the Li-Yau parbolic inequality.} Rev. Mat. Iberoam. 22 (2006), no. 2, 683--702.
\bibitem{Cao}Cao Huai-Dong, {\sl On Harnack's inequalities for the K\"ahler-Ricci flow.} Invent. Math. 109 (1992), no. 2, 247--263.
\bibitem{CN}Cao Huai-Dong, Ni Lei, {\sl Matrix Li-Yau-Hamilton estimates for the heat equation on K\"ahler manifolds.} Math. Ann. 331 (2005), no. 4, 795--807.
\bibitem{CFL}Cao Xiaodong, Fayyazuddin Ljungberg Benjamin, Liu Bowei, {\sl Differential Harnack estimates for a nonlinear heat equation.} J. Funct. Anal. 265 (2013), no. 10, 2312--2330.
\bibitem{Ca}Carron G., {\sl Geometric inequalities for manifolds with Ricci curvature in the Kato class.} arXiv:1612.03027.
\bibitem{CH} Chow Bennett, Hamilton Richard S., {\sl Constrained and linear Harnack inequalities for parabolic equations.} Invent. Math. 129 (1997), no. 2, 213--238.
\bibitem{Da}Davies E. B.,{\sl Heat kernels and spectral theory.} Cambridge Tracts in Mathematics, 92. Cambridge University Press, Cambridge, 1990. x+197 pp. ISBN: 0-521-40997-7.
\bibitem{Ha} Hamilton Richard S., {\sl A matrix Harnack estimate for the heat equation.} Comm. Anal. Geom. 1 (1993), no. 1, 113--126.
\bibitem{Ha2}Hamilton Richard S.,{\sl The Harnack estimate for the Ricci flow.} J. Differential Geom. 37 (1993), no. 1, 225--243.
\bibitem{LX}Li Junfang, Xu Xiangjin, {\sl Differential Harnack inequalities on Riemannian manifolds I: linear heat equation.} Adv. Math. 226 (2011), no. 5, 4456--4491.
\bibitem{LY}Li Peter, Yau S. T., {\sl On the parabolic kernel of the Schr\:odinger operator.} Acta Math. 156 (1986), no. 3-4, 153--201.
\bibitem{NN}Ni Lei, Niu Yanyan, {\it Sharp differential estimates of Li-Yau-Hamilton type for positive (p,p)-forms on K\"ahler manifolds.} Comm. Pure Appl. Math. 64 (2011), no. 7, 920--974.
\bibitem{Pe}Perelman G. {\sl The entropy formula for the Ricci flow and its geometric applications.} arXiv:math/0211159.
\bibitem{Qi}Qian Bin, {\sl Remarks on differential Harnack inequalities.} J. Math. Anal. Appl. 409 (2014), no. 1, 556--566.
\bibitem{RY} Ren Xin-An, Yao Sha, Shen Li-Ju, Zhang Guang-Ying. {\sl Constrained matrix Li-Yau-Hamilton estimates on K\"ahler manifolds.} Math. Ann. 361 (2015), no. 3-4, 927--941.
\bibitem{Ro} Rose C., {\sl Li-Yau gradient estimate for compact manifolds with negative part of Ricci curvature in the Kato class.} arXiv:1608.04221.
\bibitem{Ya94}Yau S. T., {\sl On the Harnack inequalities of partial differential equations.} Comm. Anal. Geom. 2 (1994), no. 3, 431--450.
    \bibitem{Ya95}Yau S. T., {\sl Harnack inequality for non-self-adjoint evolution equations.} Math. Res. Lett. 2 (1995), no. 4, 387--99.
\bibitem{ZZ0}Zhang Qi S., Zhu Meng, {\sl Li-Yau gradient bounds under nearly optimal curvature conditions.} http://arxiv.org/pdf/1511.00791v2.
\bibitem{ZZ}Zhang Qi S., Zhu Meng, {\sl Li-Yau gradient bound for collapsing manifolds under integral curvature condition.} Proc. Amer. Math. Soc. 145 (2017), no. 7, 3117--3126.
\bibitem{ZZh}Zhang Hui-Chun, Zhu Xi-Ping, {\sl Local Li-Yau's estimates on RCD*(K,N) metric measure spaces.} Calc. Var. Partial Differential Equations 55 (2016), no. 4, Paper No. 93, 30 pp.
\end{thebibliography}
\end{document}